\newcommand{\R}{\mathbb{R}}
\newcommand{\Z}{\mathbb{Z}}
\newcommand{\C}{\mathbb{C}}
\newcommand{\st}{\mid}  
\newcommand{\cross}{\times}
\newcommand{\abs}[1]{\lvert#1\rvert}
\newcommand{\ip}[2]{\langle#1,#2\rangle}	
\newcommand{\define}[1]{\textsl{#1}}
\newcommand{\gap}{\vspace{2ex}}
\newcommand{\into}{\hookrightarrow}
\newcommand{\inv}{^{-1}} 
\newcommand{\w}{\omega}	  
\theoremstyle{definition}
\newtheorem{thm}{Theorem}
\newtheorem{claim}[thm]{Claim}
\newtheorem*{rmk}{Remark}
\newtheorem*{thmn}{Theorem}
\newtheorem{fact}{Fact}
\theoremstyle{remark}
\newtheorem*{eg}{Example}
\newcommand{\rn}{\R^n}
\newcommand{\rnn}{\R^N}
\newcommand{\DD}{\Delta}
\newcommand{\LL}{\mathbb{L}}
\newcommand{\OO}{\mathcal{O}}
\newcommand{\F}{\mathcal{F}}
\newcommand{\Q}{\mathcal{Q}}
\renewcommand{\k}{\Bbbk} 
\newcommand{\gq}{geometric quantization}
\DeclareMathOperator{\im}{im}
\begin{document}

\title[Quantization of toric manifolds]{The quantization of a toric 
manifold is given by the integer lattice points in the moment polytope}

\author{Mark D. Hamilton}
\thanks{Supported by a PIMS Postdoctoral Fellowship}
\address{Department of Mathematics and Statistics\\ University of Calgary\\
Calgary, AB\\ Canada}
\email{umbra@math.toronto.edu}
\date{July 27, 2007}

\subjclass[2000]{Primary 53D50}

\begin{abstract}
We describe a very nice argument, which we learned from Sue Tolman, 
that the dimension of the quantization space 
of a toric manifold, using a K\"ahler polarization,
is given by the number of integer lattice points in the moment polytope.
\end{abstract}

\maketitle

\section{Introduction}
``The quantization of a toric manifold is given by the integer 
lattice points in the moment polytope.'' 

In principle, 
this is a well-known result; nevertheless, it does not 
seem to be written down in exactly this language.
Usually reference is made to the paper~\cite{danilov} of Danilov, 
where this result is phrased in algebro-geometric terms, 
about the sheaf cohomology of a manifold 
(compare~\eqref{qndef}).
Guillemin, Ginzburg, and Karshon describe it 
as a ``folk theorem, usually attributed
to Atiyah or Danilov.'' (\cite{ggk}, p 142)   

A precise statement is as follows:
\begin{thmn}
Let $M$ be a toric $2n$-manifold, with moment polytope $\DD \subset \rn$.  
Then the dimension of the quantization is equal to the number of 
integer lattice points in $\DD$, that is,
\[ \dim H^0 (M,\OO_\LL) = \#(\DD\cap \Z^n). \]
\end{thmn}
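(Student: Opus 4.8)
The plan is to exploit the residual torus symmetry of the quantization. Since $M$ is a compact toric manifold, the torus $T = T^n$ acts on $M$ preserving both the symplectic form $\w$ and the compatible complex structure, and this Hamiltonian action lifts canonically (via Kostant's formula) to an action on the prequantum line bundle $\LL$ by holomorphic bundle automorphisms. Consequently $T$ acts linearly on the finite-dimensional space $H^0(M,\OO_\LL)$, and because $T$ is compact and abelian this representation decomposes into weight spaces indexed by the weight lattice, $H^0(M,\OO_\LL) = \bigoplus_{\lambda\in\Z^n} V_\lambda$. It therefore suffices to show that $\dim V_\lambda = 1$ when $\lambda\in\DD\cap\Z^n$ and $V_\lambda = 0$ otherwise; summing then yields the theorem.

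The engine of the proof is a pointwise identity satisfied by a weight vector. Fix $s\in V_\lambda$ and $\xi\in\Lie{t}$, and let $X_\xi$ be the corresponding vector field on $M$. Kostant's formula for the infinitesimal action on sections, together with the hypothesis that $s$ has weight $\lambda$, gives $\nabla_{X_\xi} s = 2\pi i\,\ip{\lambda-\mu}{\xi}\,s$. Since $s$ is holomorphic, $\nabla^{0,1} s = 0$, whence $\nabla_{JX_\xi} s = i\,\nabla_{X_\xi} s = 2\pi\,\ip{\mu-\lambda}{\xi}\,s$ is a \emph{real} multiple of $s$. Combining this with the metric-compatibility of the Chern connection produces, wherever $s\neq 0$,
\[ (JX_\xi)\cdot \log\abs{s}^2 \;=\; 4\pi\,\ip{\mu-\lambda}{\xi}. \]
In words, along the directions of the complexified torus action the logarithmic derivative of $\abs{s}^2$ is governed entirely by $\mu-\lambda$.

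Both inclusions follow from this single identity. As $M$ is compact, a nonzero $s\in V_\lambda$ attains a positive maximum of $\abs{s}^2$ at some point $x_0$; there the left-hand side vanishes for every $\xi$, so $\mu(x_0)=\lambda$ and hence $\lambda\in\im(\mu)=\DD$. Thus $V_\lambda = 0$ unless $\lambda\in\DD\cap\Z^n$. The same identity forces $\dim V_\lambda\le 1$: on the open dense orbit $U\cong(\C^\ast)^n$ it determines $\log\abs{s}^2$ up to an additive constant (the radial part of $\mu$ is a diffeomorphism onto $\mathrm{int}\,\DD$, so $\abs{s}^2$ stays bounded and nonvanishing on $U$), so any two elements of $V_\lambda$ have ratio of constant modulus on $U$ and, being holomorphic, differ by a constant.

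It remains to produce a nonzero section whenever $\lambda\in\DD\cap\Z^n$, and this existence statement is the main obstacle. The identity above dictates the construction: any candidate must restrict on $U$ to the monomial of weight $\lambda$, and the only issue is whether that monomial extends holomorphically across the toric boundary divisors. I expect to settle this with the Delzant local model at each fixed point $p$, where $M$ looks like $\C^n$ with the standard torus action twisted by the facets meeting at the vertex $v=\mu(p)$. In these coordinates the candidate becomes a monomial $z^a$ with exponents $a_j = \ip{\lambda-v}{\nu_j}$, the pairings of $\lambda-v$ against the inward facet normals $\nu_j$ at $v$; so $z^a$ is regular exactly when $\lambda$ satisfies the facet inequalities meeting at $v$, i.e.\ lies in the tangent cone of $\DD$ at $v$. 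Since $\DD$ is the intersection of these tangent cones over all its vertices, the condition $\lambda\in\DD$ is precisely that the monomial extend across every boundary divisor, hence to a global section. Together with the previous step this gives $\dim V_\lambda = 1$ for $\lambda\in\DD\cap\Z^n$ and $0$ otherwise.
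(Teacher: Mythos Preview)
Your argument is correct and takes a genuinely different route from the paper. The paper works upstairs in the Delzant presentation $M = U_\F/K_\C$ with $U_\F\subset\C^N$: a section of $\LL = U_\F\times_{K_\C}\C$ is a $K_\C$-equivariant holomorphic function on $U_\F$, and since $\C^N\smallsetminus U_\F$ has complex codimension $\ge 2$, Hartogs' theorem extends it to all of $\C^N$; expanding in monomials $z^I$ reduces the count to $\{I\in\Z_+^N : L(I)=\nu\}$, which Claim~\ref{lattice} identifies with $\DD\cap\Z^n$. You instead work intrinsically on $M$, decomposing $H^0(M,\OO_\LL)$ under $T^n$ and using Kostant's formula plus holomorphy to get $(JX_\xi)\log\abs{s}^2 = 4\pi\ip{\mu-\lambda}{\xi}$; vanishing outside $\DD$ and multiplicity one both fall out of this identity, and existence is checked vertex-chart by vertex-chart. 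The paper's route is shorter once the GIT model of $(M,\LL)$ is set up---Hartogs handles existence and uniqueness in one stroke---while yours avoids the auxiliary $\C^N$ altogether and makes the role of the moment map visible analytically. The one place your sketch needs tightening is the existence step: you should say explicitly that the vertex charts cover $M$, and that you trivialize $\LL$ near each fixed point so that $T^n$ acts on the fiber there with weight equal to the vertex $v$; without pinning that down, the exponent formula $a_j=\ip{\lambda-v}{\nu_j}$ does not follow, and the local monomials are not automatically seen to patch.
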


There is a lovely proof of this fact, which
I learned from Sue Tolman, but 
as far as I know, it does not appear in the literature 
in this form.  
(In~\cite{ggk} they give a proof using these ideas (Proposition 8.4), 
but it is embedded in a much more general discussion and is not 
so easy to isolate.)
The argument is so straightforward and accessible I thought it worth
presenting on its own.  
I do not at all claim originality; 
rather, the aim of this paper is to present the argument
simply and clearly.
Thus, I have made no attempt to be as general as I can,
but have chosen transparency over generality wherever possible.

Delzant's construction of a toric manifold from 
its moment polytope is essential to this argument, and so we 
review it in Section~\ref{s:construct}.
The proof of the Theorem appears in Section~\ref{s:qn}, 
after stating a few facts about quantization and toric manifolds.
Finally, in Section~\ref{s:eg}, 
we show how the concepts in this paper apply in a simple example.

We assume the reader is familiar with the basic concepts of symplectic 
toric geometry, and thus we do not define terms like ``toric manifold'' 
and ``moment map.''  If these are unfamiliar, we recommend
the introduction by Cannas da Silva (\cite{CdaS}).

\noindent{\bf Acknowledgements.}  
I am grateful to Yael Karshon for explaining this argument to me in 
the first place several 
years ago, and for answering my questions more recently.
I am also grateful to Sue Tolman for taking the time to answer my 
questions as well.
Finally, I thank Paul Sloboda for his hospitality during the early stages of
work on this paper.

\section{Construction of toric manifolds}\label{s:construct}
We present here two different, 
though related, constructions of a toric manifold from its moment 
polytope, 
which we call the ``symplectic'' and ``complex'' constructions.
This is intended to be a review, and so we skip a number of details, 
including most of the proofs
(and so, in particular, this is not a good place to learn the 
constructions for the first time.
For that, the reader is directed to~\cite{CdaS} for the symplectic 
construction, and~\cite{kartol} and~\cite{audin} for the complex construction).

\subsection{Symplectic construction}\label{ss:delz}
This construction is due to Delzant (\cite{delzant}).
Given a convex polytope $\DD \subset \rn$, 
it produces a symplectic manifold $M^{2n}$, together with an effective 
action of the torus $T^n \cong (S^1)^n$, 
whose moment map image is precisely $\DD$.
The polytope is required to satisfy the condition that 
at each vertex there are $n$ edges, generated by a $\Z$-basis 
for the lattice $\Z^n$; such polytopes are often
called \emph{Delzant} polytopes.  
As shown in~\cite{delzant} (see the remark on p.\ 323), 
these are precisely the polytopes 
that appear as moment map images of toric manifolds.
Cannas da Silva gives a lovely explanation of Delzant's construction
in~\cite{CdaS}, which we follow to some extent, 
although we caution the reader that we use slightly different 
sign conventions than she does.

Let $\DD$ be a convex polytope in $\rn$ with $N$ facets (codimension-1 faces),
satisfying Delzant's condition.
For each facet of $\DD$, let $v_j \in \Z^n$ be the primitive\footnote{
a vector $v \in \Z^n$ is \define{primitive} if its coordinates have no 
common factor} 
inward-pointing vector normal to the facet.  
Define a projection $\pi$ from $\rnn$ to $\rn$ by taking the $j$th 
basis vector in $\rnn$ to $v_j \in \rn$:
\begin{equation}\label{map}
\begin{split}
\pi\colon \rnn &\to \rn \\
	 e_j &\mapsto v_j
\end{split}
\end{equation}
Delzant's condition on $\DD$ implies that the $v_j$ span $\rn$; 
in fact, the $n$ vectors normal to the facets meeting at any one 
vertex form a $\Z$-basis for $\Z^n$ 
(this is left as a linear algebra exercise for the reader,
although we note that this is how a Delzant polytope is defined 
in~\cite{ggk}).
Thus $\pi$ maps 
$\Z^N$ onto $\Z^n$ and so induces a map (which we also call $\pi$) 
between tori, 
\[ \pi \colon \rnn/\Z^N \to \rn/\Z^n. \]
Let $K$ be the kernel of this map, and $\k$ 
the kernel of the map~\eqref{map}, which will in fact be the Lie 
algebra of $K$.
We then get two exact sequences
\begin{subequations}\label{sequences}
\begin{align}
\begin{CD}\label{torseq}
1	@>>>	K	@>i>>	T^N	@>\pi>>	T^n	@>>> 1
\end{CD}\\
\begin{CD}\label{lieseq}
0	@>>>	\k	@>i>>	\rnn	@>\pi>>	\rn	@>>> 0
\end{CD}
\end{align}
\end{subequations}
and the dual exact sequence
\begin{equation}\label{dualseq}
\begin{CD}
0	@>>>	(\rn)^*	@>\pi^*>>  (\rnn)^*	@>i^*>L>  \k^*	@>>> 0	
\end{CD}
\end{equation}
with induced maps as shown.
Since we will be working a lot with $i^*$, we will denote it by $L$,
both for ease of notation and to emphasize that it is a linear map between 
vector spaces.

Using the vectors $v_j$, we can write the polytope as
\[ \DD = \{ x \in \rn \st \ip{x}{v_j} \geq \lambda_j, \; 1 \leq j \leq N \} \]
for some real numbers $\lambda_j$.
We assume that the $\lambda_j$ are all integers;
this will ensure that $M$ is pre-quantizable 
(see Fact~\ref{qbl} in \S\ref{s:qn}).
This gives us a vector $\lambda \in \Z^N$.

Let $\nu = L (-\lambda) \in \k^*$ (identifying $(\rnn)^*$ with $\rnn$).
Since the sequence~\eqref{dualseq} is exact, $L\inv(0) = \im \pi^*$,
so $L\inv(\nu) = \im (\pi^* - \lambda)$ 
and, since $L$ is a 
linear map between vector spaces, $L\inv(\nu)$  is an affine subspace 
of $\rnn$.
The intersection of this affine subspace with $\rnn_+$, the positive 
quadrant in $\rnn$, can be identified with the polytope $\DD$.
More precisely,

\begin{claim}\label{lattice}
Let $\DD' = L\inv(\nu) \cap \rnn_+$.
Then the map $\pi^* - \lambda$ restricts to an affine bijection from 
$\DD$ to $\DD'$, such that 
the integer lattice points in $\DD$ correspond to $\DD' \cap \Z^N_+$.
\end{claim}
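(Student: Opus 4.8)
The plan is to work entirely with the explicit affine map $\Phi = \pi^* - \lambda \colon \rn \to \rnn$. First I would record its coordinate description: identifying $(\rnn)^*$ with $\rnn$, the $j$th coordinate of $\pi^*(x)$ is $\ip{x}{v_j}$, so the $j$th coordinate of $\Phi(x)$ is $\ip{x}{v_j} - \lambda_j$. Since $\pi^*$ is linear and we only subtract the fixed vector $\lambda$, the map $\Phi$ is affine. I would then verify that $\Phi$ actually lands in the affine subspace $L\inv(\nu)$: because $L\pi^* = i^*\pi^* = 0$ by exactness of~\eqref{dualseq} and $L(-\lambda) = \nu$ by definition, one gets $L(\Phi(x)) = -L(\lambda) = \nu$ for every $x \in \rn$, so $\im \Phi \subseteq L\inv(\nu)$.

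With this coordinate description in hand, the affine bijection $\DD \to \DD'$ falls out in three short steps. Membership $x \in \DD$ is by definition the system $\ip{x}{v_j} \geq \lambda_j$ for all $j$, which is precisely the condition $\Phi(x) \in \rnn_+$; combined with $\Phi(x) \in L\inv(\nu)$ this shows $x \in \DD$ if and only if $\Phi(x) \in \DD'$. Injectivity of $\Phi$ is immediate from injectivity of $\pi^*$, which holds by exactness of~\eqref{dualseq} at $(\rn)^*$. For surjectivity onto $\DD'$, given $y \in \DD'$ I would use $L(y) = \nu = L(-\lambda)$ to conclude $L(y + \lambda) = 0$, so that $y + \lambda \in \ker L = \im \pi^*$ by exactness; writing $y + \lambda = \pi^*(x)$ gives $\Phi(x) = y$, and since $y \in \rnn_+$ the equivalence from the first step returns $x \in \DD$.

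Finally, the lattice correspondence. The forward direction is routine: if $x \in \Z^n$ then each coordinate $\ip{x}{v_j} - \lambda_j$ is an integer because $v_j \in \Z^n$ and $\lambda_j \in \Z$, so $\Phi$ carries $\DD \cap \Z^n$ into $\DD' \cap \Z^N_+$ (note $\DD' \subseteq \rnn_+$, so $\DD' \cap \Z^N_+ = \DD' \cap \Z^N$). The substance is the converse, namely that $\Phi(x) \in \Z^N$ forces $x \in \Z^n$, equivalently $(\pi^*)\inv(\Z^N) = \Z^n$; this is the step I expect to be the main obstacle, and it is exactly where Delzant's condition becomes indispensable. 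Here I would fix any vertex of $\DD$, at which the $n$ facet normals $v_{j_1},\dots,v_{j_n}$ form a $\Z$-basis of $\Z^n$. The hypothesis $\pi^*(x) \in \Z^N$ forces $\ip{x}{v_{j_k}} \in \Z$ for each of these basis vectors, whence $\ip{x}{w} \in \Z$ for all $w \in \Z^n$ by linearity; since $\Z^n$ is unimodular, this places $x$ in the dual lattice $\Z^n$. Thus $\Phi$ restricts to a bijection $\DD \cap \Z^n \to \DD' \cap \Z^N_+$, which completes the claim.
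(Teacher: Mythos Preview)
Your proof is correct and follows essentially the same route as the paper's: both read off the coordinate description $\Phi(x)_j = \ip{x}{v_j} - \lambda_j$, use exactness of~\eqref{dualseq} to identify $\im(\pi^*-\lambda)$ with $L^{-1}(\nu)$, and invoke the Delzant condition at a vertex to show $(\pi^*)^{-1}(\Z^N)=\Z^n$. The only cosmetic difference is that you phrase the last step via the dual lattice (the pairings $\ip{x}{v_{j_k}}$ are integral for a $\Z$-basis, hence $x\in\Z^n$), whereas the paper says the same thing in matrix language (the $n\times n$ block $\bar V$ has determinant $\pm1$, so $\bar V^{-1}$ is integral).
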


\begin{proof}
For $x \in \rn$, $(\pi^*-\lambda)(x) \in \rnn_+$ if{f} $x\in\DD$, as follows:
\begin{equation}
\begin{split}
(\pi^* - \lambda) (x) \in \rnn_+ 
&\iff \ip{\pi^*(x)-\lambda}{e_j} \geq 0 \quad \forall j \\
&\iff \ip{\pi^*(x)}{e_j} - \lambda_j \geq 0\\
&\iff \ip{x}{\pi(e_j)} \geq \lambda_j\\
&\iff \ip{x}{v_j} \geq \lambda_j \quad \forall j\\
&\iff x \in \DD.
\end{split}
\end{equation}

Since $(\pi^* - \lambda)$ is an affine injection
$\rn \into \rnn$, it is a bijection onto its image,
and so it is a bijection from $\DD$ onto 
$\im(\pi^* -\lambda)\cap\rnn_+$,
which is $L\inv(\nu) \cap \rnn_+$ as argued above.

Finally, since all of the coordinates of each of the $v_j$ and $\lambda$
are integers,  $(\pi^* - \lambda)$ maps $\Z^n$ into $\Z^N$.
It only remains to see that if a point in $\DD'$ has integer coordinates, 
then it is the image under $(\pi^* - \lambda)$ of a point in $\Z^n$, 
for which it suffices to show that if $\pi^*(x)=y \in \Z^N$, then $x\in \Z^n$.

The map $\pi^*$ can be written as 
\( y = V x \)
where $y$ and $x$ are the variables in $\rnn$ and $\rn$, 
written as column vectors, and 
$V$ is the $N \cross n$ matrix whose rows are the vectors $v_j$.

As noted in the previous section, $n$ of the $v_j$s corresponding to one 
vertex form a $\Z$-basis of $\Z^n$; wolog suppose $v_1, \ldots, v_n$ form such 
a basis.  
Let $\bar{V}$ be the $n\cross n$ matrix whose rows are $v_1, \ldots, v_n$,
so that the equation $Y = \bar{V} x$ defines $y_1, \ldots, y_n$ from the $x$s 
(where $Y$ is the column vector of $y_1, \ldots, y_n$).

Since $v_1, \ldots, v_n$ form a $\Z$-basis for $\Z^n$, 
the determinant of $\bar{V}$ is $\pm 1$, 
so $\bar{V}$ is invertible and its inverse has integer entries.  
Thus, given a $Y$ with integer entries, 
$x = \bar{V}\inv Y$  will also have integer entries, 
and so integer lattice points in the image of $\pi^*$ come from 
integer lattice points in $\rn$.

\end{proof}

The torus $T^N$ acts on $\C^N$ in the standard way, 
by componentwise multiplication;
this action is Hamiltonian with moment map 
$\phi(z_1,\ldots,z_N) = (\pi\abs{z_1}^2,\ldots,\pi\abs{z_N}^2)$
(where here we mean the \emph{number} $\pi$, not the map from~
\eqref{sequences}).
The inclusion $i \colon K \into T^N$ induces 
a Hamiltonian action of $K$ on $\C^N$ with moment map 
$\mu = i^* \circ \phi$ from $\C^N \to \k^*$.

\[
\xymatrix{
&\rn \ar[d]^{\pi^*}\\
\C^N \ar[r]^\phi \ar@{.>}[rd]_\mu 
&\rnn \ar[d]^{L=i^*} \\
&\k^*}
\]

Let $M = \mu\inv(\nu)/K$.  
The action of $T^N$ on $\C^N$ commutes with the action of $K$
and thus 
descends to a Hamiltonian action on the quotient $M$.
This action is not effective; however, the quotient torus 
$T^n = T^N/K $ acts  effectively.
It is a theorem of Delzant that $M$ with this action 
is a smooth toric manifold, with moment polytope $\DD$.
(See \cite{delzant}, p 329
or~\cite{CdaS}, Claim 2 in section I.2.5.)

In summary, then, $M$ is the symplectic reduction of $\C^N$ 
by $K$ at $\nu \in \k^*$, 
where both $\nu$ and $K$ are determined by the polytope
(using the sequences~\eqref{sequences} and~\eqref{dualseq}).

\subsection{Complex construction}\label{ss:cplx}

The above construction 
produces $M$ as a symplectic manifold with a torus action,
but does not give the K\"ahler structure.
For that, we use a different construction
that realizes $M$ as the quotient of an open set $U_\F$ in $\C^N$ 
by the action of a complex torus $K_\C$.
This is usually (eg in~\cite{audin}, \cite{kartol}) described 
in terms of a \emph{fan.}  However, it can also be done directly from 
the polytope,
and we have chosen this approach to avoid having 
to explain the notion of fans.
Note that, even phrased in the language of fans, this approach is 
\emph{not} the same as the complex construction that appears in~\cite{CdaS}.
The latter is the algebraic geometry construction of $M$ 
as a toric variety, which is 
the same as given for example in~\cite{fulton} and~\cite{oda}.

Begin with a Delzant polytope $\DD$ as in the previous section, and construct
the map $\pi$ and the exact sequences~\eqref{sequences} and~\eqref{dualseq}
as described there.
If we complexify the sequence~\eqref{torseq}, we get an exact sequence
\begin{equation}\label{cplxseq}
\begin{CD}
1	@>>>	K_\C	@>i>>	T^N_\C	@>\pi>>	T^n_\C	@>>> 1
\end{CD}
\end{equation}
of complex tori, where $T^N_\C$ denotes the complex torus $(\C^*)^N$, 
and $K_\C = \ker i$ is the complexification of $K$.
  
Let $F_1$, $F_2$, \ldots, $F_N$ be the facets of $\DD$.
Define a family $\F$ of subsets of $\{ 1, 2, \ldots, N\}$ as follows:
\begin{itemize}
\item $\varnothing \in \F$
\item $ I \in \F \iff \bigcap_{j\in I} F_j \neq \varnothing$
\end{itemize}
i.e., $I=\{j_1,\ldots,j_k\}$ is in $\F$ if{f} 
the intersection $F_{j_1} \cap \cdots \cap F_{j_k}$ is non-empty.

The open set $U_\F \subset \C^N$ is constructed as follows.
Given a point $z = (z_1,\ldots,z_N) \in \C^N$, 
let its \define{zero-index set} be the set 
\[ I_z = \{ j \st z_j = 0 \}. \]
Then $U_\F$ is defined to be the set of $z$ in $\C^N$ whose 
zero-index sets are in $\F$,
\begin{equation}\label{usigma}
z \in U_\F \iff I_z \in \F.
\end{equation}
Then it is a theorem that $M = U_\F / K_\C$,
where $K_\C$ acts via the inclusion $i\colon K_\C \into T^N_\C$,
is a smooth toric manifold.
(See for example~\cite{audin}, Proposition VII.1.14 and surrounding discussion.
  Her $U_\Sigma$ is the same as our $U_\F$, 
although it is constructed differently.) 

\begin{eg}
Suppose $\DD$ is the unit square in $\R^2$, with facets numbered as shown
in~\ref{f:sqr}.
Then 
\[ \F = \bigl\lbrace \varnothing, \{1\}, \{2\}, \{3\}, \{4\}, \{1,2\}, \{2,3\},
\{3,4\}, \{1,4\} \bigr\rbrace. \]
The only points \emph{not} in $U_\F$ are those 
with first and third coordinate zero, or second and fourth coordinate zero, 
and so 
\[ U_\F = \C^4 \smallsetminus 
\Bigl( \{ z_1 = 0 = z_3 \} \cup \{ z_2 = 0 = z_4 \} \Bigr). \]
\begin{figure}
\centerline{\includegraphics[height=1.2in]{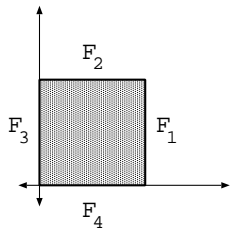}}
\caption{The polytope for the example}\label{f:sqr}
\end{figure}
\end{eg}

\begin{rmk}
Note that $\C^N \smallsetminus U_\F$ is the union of 
submanifolds of (complex) codimension at least 2, for the following reason: 
from \eqref{usigma}, 
$\C^N \smallsetminus U_\F$ will be the set of points 
whose zero-index set is \emph{not} in $\F$.
Since each of the singletons $\{ j \}$ are in $\F$, 
all points in $\C^N \smallsetminus U_\F$ 
have at least two coordinates which are zero.  
Thus $\C^N \smallsetminus U_\F$
is the union of sets of codimension at least~2.
\end{rmk}

The fact that these two constructions yield the same manifolds 
is Remark 2.6 in~\cite{kartol}.  
The reason is the following:
The sets $\mu\inv(\nu)$ and $U_\F$ are related as follows: 
\begin{equation}\label{uf}
 U_\F = K_\C \cdot \mu\inv(\nu)
\end{equation}
(see~\cite{ggk}, section 5.5.).
Thus it is no surprise that $U_\F / K_\C = \mu\inv(\nu) / K$.

\section{Quantization}\label{s:qn}

The purpose of geometric quantization is 
to associate, to a symplectic manifold $(M,\w)$, a Hilbert space 
(or a vector space) $\Q(M)$.  
(The terminology ``quantization'' comes from physics, 
where we think of $M$ as 
a classical mechanical system, and $\Q(M)$ as the 
space of wave functions of the corresponding quantum system.)
Much of the motivation for \gq\ in mathematics comes from 
representation theory.  

The basic building block in the geometric quantization of $(M,\omega)$ 
is a complex line bundle $\LL \to M$ with a connection 
whose curvature form is $\omega$,
called a \define{prequantum line bundle.}
If such an $\LL$ exists, $M$ is called \define{prequantizable,}
which will be the case if $[\omega] \in H^2(M,\R)$ is integral.
The quantization space $\Q(M)$ is constructed from sections of $\LL$.

The space of all sections is in general ``too big,'' and so a 
further structure, called a \emph{polarization,} is used to cut down 
on the number of sections.  
One of the most common is a K\"ahler polarization, which is a 
complex structure on $M$ compatible with the symplectic form.  
In this case, we take $\LL$ to be a holomorphic line bundle,
and the quantization space is the space of holomorphic sections of $\LL$ 
over $M$:
$\Q(M) = \Gamma_{\OO} (M,\LL) = H^0 (M,\OO_\LL)$.
This is what we use in the case of toric manifolds.

\begin{rmk} The quantization is often defined to be the virtual vector 
space 
\begin{equation}\label{qndef}
\Q(M) = \sum_{j\geq 0} (-1)^j H^j (M,\OO_\LL). 
\end{equation}
When $M$ is a toric manifold, 
all these cohomology groups with $j>0$ are zero 
(see e.g.~\cite{danilov}, Cor.\ 7.4, or~\cite{oda}, Cor.\ 2.8),
and so the quantization 
is just $H^0(M,\OO_\LL)$, namely the space of holomorphic sections.\\
\end{rmk}

(There are many sources for geometric quantization, for the reader who 
wishes more than these very sketchy details.  The books~\cite{wood} 
and~\cite{sniat} are classic references, if both rather technical; 
~\cite{puta} is perhaps easier as an introduction, though still very complete. 
The referee pointed me to~\cite{enriq}, available on the arXiv.
John Baez has a good brief introduction on the Web at~\cite{baez}.
\cite{ggk} also has a good, brief introduction at the beginning of 
Chapter 6, and refer to numerous other sources.  
There are of course many other references.)

We first state some facts about quantization applied to toric manifolds:

\begin{fact}\label{qbl}
The toric manifold $M$ constructed from $\DD$ is pre-quantizable 
if\footnote{
This ``if'' is actually an ``if{f},'' modulo some subtleties about 
equivariance.  
If $M$ is \emph{equivariantly} prequantizable, 
in the sense of~\cite{ggk} chapter 6, then it is necessary 
that $\lambda$ be in $\Z^N$.
If $M$ is ``non-equivariantly'' prequantizable, then the polytope 
(and thus $\lambda$) still satisfies an integrality condition.  
Since the moment map is defined only up to a constant, we can add a constant to
$\lambda$ without changing the construction of $M$;
this corresponds to translating $\DD$ without changing its shape.
The integrality condition implies that we can choose the constant 
so that $\lambda$ lies in $\Z^N$. } 
the $\lambda \in \R^N$ appearing in the symplectic construction 
is in $\Z^N$.
\end{fact}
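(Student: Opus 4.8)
The plan is to produce a prequantum line bundle on $M$ directly, by reducing the obvious prequantum data on $\C^N$, and to match the integrality of $\lambda$ with the integrality of the reduction level $\nu$ in the weight lattice of $K$. Recall that $M$ is prequantizable as soon as it carries a complex line bundle $\LL \to M$ with a connection whose curvature is the reduced form $\w$, which holds once $[\w] \in H^2(M,\R)$ is integral. Rather than compute $[\w]$ cohomologically, I would build $\LL$ by symplectic reduction of the trivial prequantum bundle upstairs.

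First I would set up the prequantization of $(\C^N,\w_0)$: the trivial bundle $\C^N \cross \C$ carries a connection with curvature $\w_0$, and, in the normalization for which the $T^N$-moment map is $\phi(z)=(\pi\abs{z_1}^2,\ldots,\pi\abs{z_N}^2)$, the standard $T^N$-action lifts to a connection-preserving action on this bundle. Restricting to the subgroup $K\into T^N$, whose moment map is $\mu = L\circ\phi$, I would invoke the reduction of prequantum data (due to Guillemin and Sternberg): when $\nu$ is a regular value, $M=\mu\inv(\nu)/K$ inherits the reduced bundle $\LL=\bigl(\LL_{\C^N}|_{\mu\inv(\nu)}\bigr)/K$, \emph{provided} $K$ acts on the fibers over $\mu\inv(\nu)$ through an honest character --- equivalently, provided $\nu$ lies in the weight lattice $\Lambda_K^*=\{\xi\in\k^*\st \ip{\xi}{\Lambda_K}\subseteq\Z\}$, where $\Lambda_K=\k\cap\Z^N$ is the integer lattice of the torus $K$. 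For a Delzant polytope $\nu$ is automatically a regular value, so everything comes down to the integrality of $\nu$.

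The second step is the integrality computation, and this is where $\lambda\in\Z^N$ enters. Dualizing the exact lattice sequence $0\to\Lambda_K\to\Z^N\xrightarrow{\pi}\Z^n\to 0$ shows that $L=i^*$ sends the integer lattice $\Z^N\subset(\rnn)^*$ into $\Lambda_K^*$: for $\lambda\in\Z^N$ and $w\in\Lambda_K$ one has $\ip{L(-\lambda)}{w}=\ip{-\lambda}{i(w)}\in\Z$, since $\lambda$ and $i(w)=w$ both have integer coordinates. Hence $\nu=L(-\lambda)$ is integral, the required character of $K$ exists, and $\LL$ descends to $M$. This is exactly the bookkeeping used in the proof of Claim~\ref{lattice}, now carried out on the dual side.

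I expect the main obstacle to be not the integrality computation, which is routine once the lattices are correctly identified, but rather keeping the three normalizations consistent: the scaling of $\w_0$ that makes $\phi$ carry the factor of $\pi$, the induced lift of the $T^N$-action to the trivial prequantum bundle, and the identification of a character of $K$ with a weight in $\Lambda_K^*$ (with its factor of $2\pi$ in $\exp X\mapsto e^{2\pi i\ip{\xi}{X}}$). A careless choice would shift the integrality threshold by a scale and spoil the clean statement ``$\lambda\in\Z^N$''; matching the conventions already fixed in Section~\ref{ss:delz} is the delicate point.
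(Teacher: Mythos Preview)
Your approach is sound, but note that the paper does not actually prove this Fact: it is stated without argument and deferred to \cite{ggk}, Example~6.10, and \cite{delzant}, p.~327. So there is no ``paper's own proof'' to compare against. That said, your outline is precisely the argument one finds in \cite{ggk}: reduce the trivial prequantum bundle on $\C^N$ by $K$ at level $\nu$, and observe that the quotient bundle exists exactly when $\nu$ lies in the weight lattice $\Lambda_K^*$, which follows from $\lambda\in\Z^N$ by the pairing computation you wrote down. Your construction also dovetails with the paper's Fact~2, where the prequantum bundle is presented as $U_\F\times_{K_\C}\C$ with $K_\C$ acting on the fiber with weight $\nu$; for that action to be well-defined one needs $\nu$ integral, and your second step supplies exactly that check.

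One small caution: you write that ``for a Delzant polytope $\nu$ is automatically a regular value.'' This is true, but it is a consequence of the Delzant condition (the $K$-action on $\mu^{-1}(\nu)$ is free because the normals at each vertex form a $\Z$-basis), not something immediate from the definitions; if you present this as a proof you should either cite it or indicate why. Your closing remark about normalization pitfalls is well taken and is indeed where most errors in writing this out would occur, but there is no missing mathematical idea.
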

See~\cite{ggk}, Example 6.10 on p.\ 93; see also~\cite{delzant}, p 327.

\begin{fact}
If the toric manifold $M$ is presented as $U_\F / K_\C$ 
as in \S\ref{ss:cplx}, then 
\begin{equation}\label{linebdl}
\LL = U_\F \cross_{K_\C} \C
\end{equation}
is a prequantum line bundle, 
where $K_\C$ acts on $\C$ with weight $\nu = L(-\lambda) \in \k^*$ 
($\lambda$ as in \S\ref{ss:delz}).
\end{fact}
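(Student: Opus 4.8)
The plan is to obtain $\LL$ by reducing the standard prequantum data on $\C^N$, and then to identify the reduced bundle with the associated bundle $U_\F \cross_{K_\C} \C$ using the relation $U_\F = K_\C \cdot \mu\inv(\nu)$ from~\eqref{uf}. First I would recall that $\C^N$, with its standard symplectic form $\w_0 = \sum_j dx_j \wedge dy_j$, carries a canonical prequantum line bundle: the trivial bundle $\C^N \cross \C$ equipped with a Hermitian connection whose curvature is $\w_0$ (such a connection exists because $\w_0$ is exact on $\C^N$). The componentwise $T^N$-action lifts to this bundle, and hence so does the $K$-action through the inclusion $i$; the induced infinitesimal action of $\k$ along the fibres is governed, via the standard prequantization formula, by the moment map $\mu = i^* \circ \phi$.

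Next I would twist this lifted $K$-action by the character of weight $\nu = L(-\lambda)$, which is a genuine character precisely because $\lambda \in \Z^N$ (Fact~\ref{qbl}) forces $\nu$ to lie in the weight lattice of $K$. The point of the twist is that the twisted $K$-action is \emph{trivial on the fibres over $\mu\inv(\nu)$}: the fibrewise action is controlled by $\mu - \nu$, which vanishes on the level set. Consequently the connection, the Hermitian metric, and the holomorphic structure all descend to the quotient $\mu\inv(\nu)/K = M$, producing a line bundle there. Marsden--Weinstein reduction applied at the level of prequantum data then guarantees that the curvature of the descended connection is the reduced symplectic form $\w$ on $M$; this is exactly the assertion that prequantization commutes with reduction, and I would cite~\cite{ggk} (Chapter 6) for it rather than reprove it.

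Finally I would match the two descriptions of the bundle. By~\eqref{uf} the inclusion $\mu\inv(\nu) \into U_\F$ induces a diffeomorphism $\mu\inv(\nu)/K \cong U_\F/K_\C = M$, under which the reduced bundle $\mu\inv(\nu) \cross_K \C$ (with the twist by weight $\nu$) is identified with the associated bundle $U_\F \cross_{K_\C} \C$ of~\eqref{linebdl}. This uses that $K_\C$ acts freely on $U_\F$, which is part of the smoothness of $M$, so that the quotient is genuinely a line bundle rather than an orbifold bundle.

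The main obstacle I expect is the twisting step: one must check that the weight $\nu = L(-\lambda)$ is \emph{exactly} the value that cancels the fibrewise $K$-action on $\mu\inv(\nu)$, and that the resulting descended connection is compatible with the complex (not merely symplectic) structure, so that $\LL$ is simultaneously holomorphic and prequantum. Tracking the interaction between the real reduction $\mu\inv(\nu)/K$ and the complex quotient $U_\F/K_\C$ --- equivalently, verifying that the character of $K_\C$ extends that of $K$ and acts holomorphically on $\C$ --- is the delicate part, but it is forced by the integrality of $\lambda$ together with~\eqref{uf}.
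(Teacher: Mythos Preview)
The paper does not prove this statement: it is recorded as a background ``Fact'' with neither proof nor citation, so there is nothing in the paper to compare your argument against.

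Your outline is nonetheless the standard justification --- reduce the trivial prequantum bundle on $\C^N$ by $K$ at level $\nu$, invoke that prequantization commutes with reduction (as in~\cite{ggk}, Chapter~6), and then pass to the complex quotient via~\eqref{uf}. The sketch is sound, and you have correctly isolated the one point requiring real care: pinning down that the fibre weight comes out as exactly $\nu = L(-\lambda)$ once all the sign conventions (for the moment map, the Kostant lift, and the equivalence relation defining the associated bundle) are made consistent. If you were to write this out in full, that bookkeeping is where the work lies; the conceptual content is as you describe.
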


Now we come to the main point of the paper, which is to give a proof 
of the following theorem:

\begin{thmn}\label{onlythm}
Let $M_\DD$ be a toric manifold, with moment polytope $\DD \subset \rn$.  
Then the dimension of the quantization space is equal to the number of 
integer lattice points in $\DD$, 
\[ \dim H^0 (M,\OO_\LL) = \#(\DD\cap \Z^n). \]
\end{thmn}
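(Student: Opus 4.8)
The plan is to use the complex construction to turn holomorphic sections into very explicit functions, and then count a monomial basis by invoking Claim~\ref{lattice}. Presenting $M$ as $U_\F/K_\C$ and $\LL = U_\F \cross_{K_\C} \C$, I would first recall the standard correspondence between holomorphic sections of an associated line bundle and equivariant holomorphic functions on the total space: a section of $\LL$ is the same as a holomorphic map $f\colon U_\F \to \C$ satisfying $f(k\cdot z) = \chi_\nu(k)\,f(z)$ for every $k\in K_\C$, where $\chi_\nu$ is the character of $K_\C$ of weight $\nu = L(-\lambda)$. Thus $H^0(M,\OO_\LL)$ is identified with the space of weight-$\nu$ equivariant holomorphic functions on $U_\F$.

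The second step, which I expect to be the crux of the argument, is to enlarge the domain from $U_\F$ to all of $\C^N$. By the Remark in \S\ref{ss:cplx}, $\C^N \smallsetminus U_\F$ is a union of analytic sets of complex codimension at least two, so Hartogs' extension theorem applies: every holomorphic function on $U_\F$ extends uniquely to a holomorphic function on $\C^N$, and the extension is still $K_\C$-equivariant of weight $\nu$ (the equivariance, an identity of holomorphic functions, persists on $\C^N$ by the uniqueness of the extension applied to both sides). Consequently $H^0(M,\OO_\LL)$ is the space of weight-$\nu$ equivariant entire functions on $\C^N$.

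Third, I would decompose such functions in monomials. Writing $f = \sum_{a\in\Z^N_+} c_a\,z^a$, each monomial $z^a$ is a weight vector for the $T^N_\C$-action of weight $a$, so along the inclusion $i\colon K_\C \into T^N_\C$ its $K_\C$-weight is $i^*(a) = L(a)$. Since $K_\C$ acts holomorphically, projection onto the weight-$\nu$ subspace preserves holomorphicity, and equivariance of weight $\nu$ forces $c_a = 0$ whenever $L(a)\neq\nu$. Hence the monomials $\{\,z^a \st a\in\Z^N_+,\ L(a)=\nu\,\}$ form a basis of $H^0(M,\OO_\LL)$, and the theorem reduces to counting the exponents $a$ satisfying these two conditions.

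Finally, I would feed this count into Claim~\ref{lattice}. The constraint $L(a)=\nu=L(-\lambda)$ is equivalent to $a+\lambda\in\ker L = \im\pi^*$, i.e.\ $a = (\pi^*-\lambda)(x)$ for the unique $x\in\rn$ with $\pi^*(x)=a+\lambda$; together with $a\in\Z^N_+$ this says precisely that $a\in\DD'\cap\Z^N_+$. Since the Claim gives an affine bijection $\pi^*-\lambda\colon\DD\to\DD'$ carrying $\DD\cap\Z^n$ onto $\DD'\cap\Z^N_+$, the admissible exponents are in bijection with $\DD\cap\Z^n$; as $\DD$ is compact this is a finite set, so the quantization is finite-dimensional and $\dim H^0(M,\OO_\LL) = \#(\DD\cap\Z^n)$. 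The only delicate point in the whole argument is the second step: without the codimension-two observation there would be no reason for sections to be expressible as entire functions on $\C^N$, and it is exactly Hartogs' theorem that makes the monomial bookkeeping---and hence the clean lattice-point count---possible.
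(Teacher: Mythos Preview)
Your proposal is correct and follows essentially the same route as the paper: identify sections with $K_\C$-equivariant holomorphic functions on $U_\F$, extend across the codimension-$\geq 2$ complement by Hartogs, expand in monomials and pick out those with $L(a)=\nu$, then invoke Claim~\ref{lattice} to match $\Z^N_+\cap L^{-1}(\nu)$ with $\DD\cap\Z^n$. Your write-up even supplies a couple of details the paper leaves implicit (persistence of equivariance under the Hartogs extension, and finiteness via compactness of $\DD$).
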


\begin{proof}
A holomorphic section of $\LL = (U_\F \cross \C) / K_\C$ 
over $M = U_\F / K_\C$ corresponds to a $K_\C$-equivariant, 
holomorphic function $s \colon U_\F \to \C$.
Because $\C^N \smallsetminus U_\F$ 
is the union of submanifolds of codimension 
greater than or equal to 2
(see the Remark in \S\ref{ss:cplx}),
$s$ extends to a holomorphic function on all of $\C^N$,
which we still call $s$.  
(This follows from Hartogs' theorem, for example~\cite{hartogs} p.\ 7 
--- a holomorphic function on $\C^N$ for $N>1$ cannot have an isolated 
singularity, and therefore cannot have a singularity on a submanifold 
of codimension $\geq 2$.)

Thus we are looking for the $K_\C$-equivariant, holomorphic functions 
$ s \colon \C^N \to \C$, 
where the action of $K_\C$ on $\C$ is with weight $\nu$, 
and the action on $\C^N$ is via the inclusion $i\colon K_\C \into T^N_\C$
and the standard action of $T^N_\C$ on $\C^N$.
Write such a function $s$ as its Taylor series, so that 
\[ s = \sum_{I\in \Z_+^N} a_I z^I \]
(where $I = \{ j_1, \ldots, j_N \}$ is a multi-index, 
$a_I$ is a complex number, 
and as usual in complex variables
$z^I$ means $z_1^{j_1} z_2^{j_2} \cdots z_N^{j_N}$).
Consider one term $z^I$ in this sum at a time.

First note that, for $t \in T^N_\C$ and $z \in \C^N$, 
\[ (t\cdot z)^I = (t_1 z_1, \ldots, t_N z_N )^I 
= \bigl( (t_1 z_1)^{j_1} \cdots (t_N z_N)^{j_N}\bigr) 
= t^I z^I. \]
Now suppose $s(z) = z^I$, and see when it is equivariant.
First, 
\[ s(k\cdot z) = s(i(k)\cdot z) = (i(k) \cdot z)^I = i(k)^I z^I 
= k^{i^*(I)} z^I. \]
On the other hand, 
\[ k \cdot s(z) = k^\nu \cdot z^I. \]
Thus $s(k\cdot z) = k\cdot s(z)$ when $i^*(I) = \nu$, i.e. $L(I) = \nu$.

Therefore, a basis for the space of equivariant sections, 
and thus for $H^0(M,\OO_\LL)$, 
is 
\[ \{ z^I \st L(I) = \nu, \; I \in \Z_+^N \}. \]
The set of such $I$ is $\Z^N_+ \cap L\inv(\nu)$, which, 
as noted in Claim~\ref{lattice},
corresponds precisely with 
the set of integer lattice points in the moment polytope $\DD$.
\end{proof}

\section{Example}\label{s:eg}

To see how all of these constructions play out, we will go through 
one example in detail (with some calculations left as exercises).
Take the polytope to be the triangle in $\R^2$ with vertices 
$(0,0)$, $(0,m)$, and $(m,0)$, for $m \in \Z_+$, as shown.
Here $N=3$ and $n=2$, so we will be constructing a 4-dimensional 
manifold as a quotient of $\C^3$, with an action of $T^2$.

\begin{figure}[ht]
\centerline{\includegraphics[height=1in]{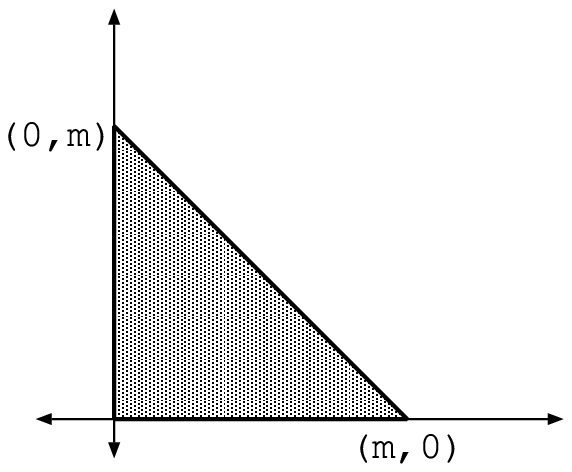}}
\caption{The polytope $\DD$ for this example}\label{f:polytope}
\end{figure}

The three normal vectors are 
\[ v_1 = ( 0,1 ) \quad v_2 = ( 1,0 ) \quad v_3 = ( -1 , -1 ) \]
and $\lambda$ is $(0,0,-m)$. 
Therefore the map $\pi \colon \R^3 \to \R^2$ in~\eqref{map}
can be written as the matrix 
\[
\begin{bmatrix}
0 & 1 & -1 \\
1 & 0 & -1 
\end{bmatrix}
\]
or, writing the coordinates in $\R^3$ as $(x,y,z)$, 
\[ \pi(x,y,z) = (y-z, x-z). \]
The kernel of this map is $ \{ x = y = z \} $ in $\R^3$, 
which is $\k$, which we identify with $\R$ by $i\colon t \mapsto (t,t,t)$.

The corresponding map on tori is 
\[ \bigl( e^{2\pi ix},e^{2\pi iy},e^{2\pi iz}\bigr) \mapsto 
\bigl( e^{2\pi i(y-z)},e^{2\pi i(x-z)}\bigr) \]
with kernel $K = (e^{2\pi it},e^{2\pi it},e^{2\pi it})$, 
which is $S^1$ embedded into $T^3$ as the diagonal subtorus.

For the dual sequence, the map $\pi^*$ will be given by the transpose matrix
\[ 
\begin{bmatrix}
1 & 0 \\  0 & 1  \\  -1 & -1
\end{bmatrix}
\]
or, writing coordinates in $\R^2$ as $(a,b)$, 
\[ \pi^*(a,b) = \bigl(a,b,(-a-b)\bigr). \] 
Similarly, the map $L=i^*$ is $L(x,y,z) = x+y+z$.

From this, we get that $\nu = L\bigl( -(0,0,-m)\bigr) = m$.
Therefore, the affine space $L\inv(\nu)$ is the space 
$\{ x + y + z = m \}$ lying in $\R^3$;
the intersection of this space
with the positive orthant $\R^3_+$ is a triangle, 
whose identification with $\DD$ is easy to see.
(See Figure~\ref{f:poly3d}.)

\begin{figure}[ht]
\centerline{\includegraphics{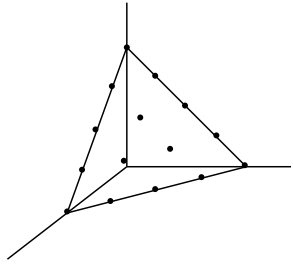}}
\caption{$L\inv(\nu)$ with integer lattice points}\label{f:poly3d}
\end{figure}

Pulling this intersection $L\inv(\nu) \cap \R^3_+$ back 
by the map $\phi \colon \C^3 \to \R^3$ gives us 
\[ \mu\inv(\nu) = \{ z \in \C^3 \st 
\abs{z_1}^2 + \abs{z_2}^2 + \abs{z_3}^2 = m/\pi \} \cong S^5; \]
the reduction of this by the diagonal action of $S^1$ is $\C P^2$.

Note that the integer points in $\DD'$
will be the set $\{ (x,y,z) \in \Z^3 \st x,y,z \geq 0, \; x+y+z = m \}$,
which is in one-to-one correspondence with the set 
\[ \{ (x,y) \in \Z^2 \st  x,y \geq 0, \; x+y \leq m \} \]
integer points in $\DD$.

For the complex construction,   
labelling the facets using the same numbering as we used for the normal 
vectors, the collection of subsets $\F$ corresponding to this polytope 
is 
\[ \F = \bigl\lbrace \varnothing, 
\{1\}, \{2\}, \{3\}, \{1,2\}, \{2,3\}, \{1,3\} \bigr\rbrace. \]
Thus $U_\F$ is the set of points in $\C^3$ which have either zero, 
one, or two coordinates zero, that is, $U_\F = \C^3 \smallsetminus \{ 0 \}$.

The complex torus here is the complexification of $K$, i.e.\ 
$K_\C = \C^*$, acting on $\C^3$ by the diagonal action.
The quotient of $\C^3 \smallsetminus \{ 0\}$ by the diagonal action 
of $\C^*$ is $\C P^2$.

(Notice that in passing to the complex construction we lose the information 
about the ``size'' of the reduced space.
This is a general phenomenon --- $U_\F$ ``remembers'' the directions of the 
faces of the polytope, but not their sizes.)

Finally, the prequantum line bundle will be $\LL = U_\F \cross_{K_\C} \C$,
where $K_\C$ acts on $\C$ with weight $m$
(namely $k \cdot z = k^m z$, $k \in \C^*$).

For the space of sections, we are looking for $K_\C$-equivariant 
holomorphic functions
$s \colon U_\F \to \C$, which, as noted in \S\ref{s:qn}, will extend 
to a $K_\C$-equivariant, holomorphic function $s\colon \C^3 \to \C$.

What are such functions? 
Take $s$ to be a monomial $z_1^{j_1} z_2^{j_2} z_3^{j_3}$.
For $k \in \C^* \cong K_\C$, 
\begin{equation}
\begin{split}
s(k\cdot z) &= (k z_1)^{j_1} (k z_2)^{j_2} (k z_3)^{j_3}\\
&= k^{(j_1 + j_2 + j_3)} z_1^{j_1} z_2^{j_2} z_3^{j_3},
\end{split}
\end{equation}
which will equal $k\cdot s(z) = k^m s(z)$ precisely when 
$j_1 + j_2 + j_3 = m$,
that is, the triple of integers  $(j_1,j_2,j_3)$ lies in $\DD'$.
Thus, the monomials $z^I$ such that $I \in \DD'$ will be a basis
for the space of $K_\C$-equivariant, holomorphic sections of $\LL$.

Looking at the polytope in Figure~\ref{f:polytope}, we can see 
that there will be 
$\mbox{$(m+1)$} + m + \cdots + 1$ points with integer coordinates,
and so the quantization will have dimension $\frac{m(m+1)}{2}$.

\gap
\noindent{\bf Exercise:} Repeat the above procedure when the polytope 
is the square considered in the example in \S\ref{ss:cplx}.


\begin{thebibliography}{MMM}

\bibitem[A]{audin} M.\ Audin, \emph{Torus Actions on Symplectic Manifolds,} 
2nd edition, Progress in Mathematics, 93. Birkhäuser Verlag, Basel, 2004.

\bibitem[B]{baez} J. Baez, notes on ``Geometric Quantization,''\\ 
{\tt http://math.ucr.edu/home/baez/quantization.html}

\bibitem[CdS]{CdaS} A.\ Cannas da Silva, \emph{Symplectic toric manifolds,} 
in 
M. Audin, A. Cannas da Silva, and E. Lerman, 
\emph{Symplectic geometry of integrable Hamiltonian systems}, 
Advanced courses in mathematics, CRM Barcelona,
Birkh\"auser Verlag, Boston, 2003

\bibitem[Dn]{danilov} V.\ I.\ Danilov, ``The geometry of toric varieties,''
\emph{Russian Math. Surveys} {\bf 33} (1978), no. 2, 97--154

\bibitem[Dz]{delzant} Th. Delzant, 
``Hamiltoniens p\'eriodiques et images convexes de l'application moment,''
\emph{Bull. Soc. math. France,} {\bf 116} (1988), p 315--339

\bibitem[E]{enriq} A. Echeverria-Enriquez, 
M.C. Munoz-Lecanda, N. Roman-Roy, and C. Victoria-Monge,
``Mathematical Foundations of Geometric Quantization,'' 
arxiv.org/abs/math-ph/9904008.

\bibitem[F]{fulton} W.\ Fulton, \emph{Introduction to Toric Varieties,} 
Annals of Mathematics Studies {\bf 131}, Princeton University Press, 1993

\bibitem[GH]{hartogs} Griffiths and Harris, 
\emph{Principles of Algebraic Geometry,} Wiley Classics Library edition, 
John Wiley and Sons, 1978 (1994)

\bibitem[GGK]{ggk}  V. Guillemin, V. Ginzburg, and Y. Karshon, 
\emph{Moment Maps, Cobordisms, and Hamiltonian Group Actions,}
Mathematical Surveys and Monographs, {\bf 98},  
American Mathematical Society, Providence, RI, 2002.

\bibitem[KT]{kartol} Y.\ Karshon and S.\ Tolman, 
``The moment map and line bundles over presymplectic toric manifolds,'' 
\emph{J. Differential Geom.} {\bf 38}  (1993),  no. 3, 465--484

\bibitem[O]{oda} T.\ Oda, \emph{Convex Bodies and Algebraic Geometry,
An introduction to the theory of toric varieties,}
Springer-Verlag, 1988

\bibitem[P]{puta} M. Puta, 
\emph{Hamiltonian Mechanical Systems and Geometric Quantization,} 
Kluwer Academic Publishers, Dordrecht, 1993

\bibitem[S]{sniat} J. \' Sniatycki, \emph{Geometric Quantization 
and Quantum Mechanics,} Applied Mathematical Sciences, {\bf 30}, 
Springer-Verlag, New York-Berlin, 1980

\bibitem[W]{wood} N.M.J. Woodhouse, \emph{Geometric Quantization,} 
2nd edition, Clarendon Press, Oxford, 1991

\end{thebibliography}
\end{document}